\documentclass [11pt,] {article}
\usepackage{amsmath,amssymb}
\usepackage{amsthm,amsfonts,amscd,epsfig,lineno}
\usepackage{xcolor}
\usepackage{multirow}
\usepackage{cite,epic}
\usepackage{adjustbox}
\usepackage{graphicx}
\usepackage[left=2cm]{geometry}
\usepackage{lscape}

\usepackage{amsthm}

\usepackage{makecell}
\usepackage{float}
\usepackage{booktabs}
\usepackage{amsmath, amssymb, amsthm}

\newtheorem{theorem}{Theorem}[section]
\newtheorem{lemma}[theorem]{Lemma}

\newtheorem{corollary}[theorem]{Corollary}

\newtheorem{example}[theorem]{Example}

\usepackage{booktabs}
\usepackage{multirow}
\usepackage{makecell}
\usepackage{array}
\usepackage{float}

\usepackage{caption}
\newcount\refno
\usepackage{cite}
\numberwithin{equation}{section}

\title {Quantum Latin Hypercubes with Maximal Cardinality
}

\author {   \footnotesize     Mingzhen Lv,   Haitao Cao\footnote{Corresponding author. \; E-mail address: caohaitao@njnu.edu.cn.  }\\
       \scriptsize    School of Mathematical Sciences, Ministry of Education Key Laboratory for NSLSCS,\\ \scriptsize Nanjing Normal University, Nanjing 210023, China. }
\date{}

\title{Quantum Latin Hypercubes with Maximal Cardinality}

\begin {document}
\parindent=0.5cm
\baselineskip=0.6cm
\maketitle
\begin {abstract}
\baselineskip=0.6cm
We construct quantum Latin squares with maximal cardinality and quantum Latin hypercubes with maximal cardinality over the real field.

\noindent {\bf Keywords:} quantum Latin square, quantum Latin hypercube, maximal cardinality
\end{abstract}

\section{Introduction}

Quantum Latin squares are a natural quantum generalization of classical Latin squares, introduced by Musto and Vicary~\cite{Musto2016}. Quantum Latin squares and their higher-dimensional generalizations have found wide applications in quantum information, including unitary error bases~\cite{Musto2016}, mutually unbiased bases~\cite{Zang_2022}, $k$-uniform states~\cite{PhysRevA.97.062326,Zang_2021}, etc.

A quantum Latin square of order $q$ is a $q\times q$ array of quantum states, where each entry is a unit vector in the $q$-dimensional Hilbert space $\mathcal{H}_q$, such that every row and every column forms an orthonormal basis of $\mathcal{H}_q$. 

The cardinality of a quantum Latin square is defined as the number of its vectors that are distinct up to global phase, satisfying $q\le c\le q^2$.
Over the complex field, Zhang et al.~\cite{ZHANG2026114863,MR5034584} proved that for every $q\ge4$, except for a finite number of exceptions, there exists a quantum Latin square with maximum cardinality $q^2$. Subsequently, Zang et al.~\cite{MR5047492} resolved all remaining cases, establishing the existence for every $q\ge4$. For more on quantum Latin squares, see, among others, \cite{Mu1,Mu3,PhysRevA.104.042423,Rather2,Rather3,Zang1,Zang3,Zang5,Zhang2,Zhang4,Zhang5,Zyc}.

Goyeneche et al.~\cite{PhysRevA.97.062326} generalized quantum Latin squares to higher dimensions by introducing the notion of quantum Latin hypercubes: A quantum Latin hypercube of order $q$ and dimension $n$, denoted $\mathrm{QLH}(q,n)$, is an $n$-dimensional array of size $q\times q\times\cdots\times q$,
$
C=(|c_{i_0,i_1,\dots,i_{n-1}}\rangle),
$
where each entry is a unit vector in $\mathcal{H}_q$, such that every edge of the hypercube forms an orthonormal basis of $\mathcal{H}_q$.
Equivalently, for any axis direction $t\in\{0,1,\dots,n-1\}$ and any fixed coordinates $(i_0,\dots,i_{t-1},i_{t+1},\dots,i_{n-1})$, the set
$
\{|c_{i_0,\dots,i_{t-1},x,i_{t+1},\dots,i_{n-1}}\rangle:x\in\mathbb{Z}_q\}
$
forms an orthonormal basis of $\mathcal{H}_q$.
In particular, when $n=2$, a $\mathrm{QLH}(q,2)$ is a quantum Latin square of order $q$; when $n=3$, a $\mathrm{QLH}(q,3)$ is a quantum Latin cube of order $q$, denoted $\mathrm{QLC}(q)$.

The cardinality of a quantum Latin hypercube $\mathrm{QLH}(q,n)$ is defined as the number of its vectors that are distinct up to global phase. Clearly, the cardinality $c$ satisfies $q\le c\le q^n$. When $c=q$, the hypercube is called classical; when $c=q^n$, it has maximum cardinality.
Zhang et al.~\cite{MR5034584} have proved that there exists a
$\mathrm{QLC}(v)$
with maximal cardinality for all positive integers
$v \geq 4$, except when
$v$ is a prime
or
$v = pq$, where $p, q$
are prime numbers and
$(p, q) \neq (2, 2)$.
The remaining cases are still open.

In this paper, we first extend the aforementioned results to the real field, establishing the existence of maximum-cardinality quantum Latin squares over the real field.
Furthermore, we generalize this construction to the higher-dimensional setting, obtaining maximum cardinality quantum Latin hypercubes over the real field.
Our main results are as follows.

\begin{theorem}
\label{thm:1.1}
	For any positive integers $q\ge 4$, there exists a quantum Latin square of order $q$ over the real field with maximal cardinality $q^2$.
\end{theorem}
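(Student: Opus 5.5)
We will build real quantum Latin squares of maximal cardinality by starting from a classical Latin square and ``rotating'' selected $2\times 2$ blocks by real orthogonal transformations. The key observation is this. Suppose a quantum Latin square has a $2\times 2$ subsquare with rows $r,r'$ and columns $s,s'$ whose entries are $|a\rangle,|b\rangle$ in the first row and $|b\rangle,|a\rangle$ in the second, with $\langle a|b\rangle=0$. Replace these four entries by $\cos\theta|a\rangle+\sin\theta|b\rangle$ and $-\sin\theta|a\rangle+\cos\theta|b\rangle$ in the first row, and by $-\sin\theta|a\rangle+\cos\theta|b\rangle$ and $\cos\theta|a\rangle+\sin\theta|b\rangle$ in the second. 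Every row and column still spans the same subspace with an orthonormal basis, so the result is again a quantum Latin square, and it stays real if $\theta$ is real. More generally, we will use the standard ``block substitution'' viewpoint. If the cells are partitioned into blocks $R_k\times C_k$ on which the entries span a common subspace $V_k$, then any real quantum Latin square of order $|R_k|=|C_k|=\dim V_k$ (written in an orthonormal basis of $V_k$) may be substituted into each block independently, and the result remains a real quantum Latin square.

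\textbf{Step 1 (orders $q=4$ and small base cases).} For $q=4$ we take the Cayley table of $\mathbb{Z}_2\times\mathbb{Z}_2$. Its cells decompose into four disjoint intercalates, and also into other intercalate families. We apply the rotations above in stages, with generic angles, to nested $2\times 2$ and $4\times 4$ structures. An alternative is to write an explicit real $4\times4$ example with entries from a parametrised family such as
\[
\tfrac12(\pm1,\pm1,\pm1,\pm1)
\]
combined with rotated standard vectors, and then check that all $16$ vectors are pairwise non-parallel. Likewise we will give explicit real examples for a finite list of small orders ($q=5,6,7$, possibly more), found by hand or computer. These are needed because the recursion below does not reach them.

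\textbf{Step 2 (recursive/product construction).} For general $q$ we will write $q=\sum_k m_k$ with each $m_k\ge 4$, or use $q=mn$. For $q=mn$ we take the tensor-type product $|a_{ij}\rangle\otimes|b_{kl}\rangle$ of a real QLS of order $m$ and a real QLS of order $n$, both of maximal cardinality. This gives a real QLS of order $mn$ with cardinality $m^2n^2$, since tensor products of pairwise non-parallel vectors are non-parallel. For orders that are not products, such as primes, we will instead use a classical Latin square of order $q$ that contains disjoint subsquares or transversal-like structures covering all cells, for instance a direct construction from a Latin square with a partition into intercalates and larger subsquares. We then substitute real maximal-cardinality QLS blocks into each subsquare, using an independent generic rotation $O_k\in SO(\dim V_k)$ for each block. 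The genericity argument is the real analogue of the complex one. The condition ``two given entries are parallel'' is a nontrivial real-algebraic condition on the angles. Since $SO(d)$ is an irreducible real variety and only finitely many such conditions arise, a generic choice avoids all of them. The conditions must be nontrivial, because entries in different blocks that lie in different subspaces, or in the same block, can always be separated by some rotation.

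\textbf{Main obstacle.} The hard part is covering every $q$, especially primes, using only real entries. In the complex setting one may use Fourier-type matrices, which are unavailable over $\mathbb{R}$. We expect to handle this through a direct-sum decomposition of $\mathcal{H}_q=\bigoplus V_k$ with $\dim V_k\in\{4,5,6,7\}$, using a classical Latin square of order $q$ built from blocks via a ``prolongation''-type argument. We would first try to make a Latin square whose cells split into subsquares of orders $4$–$7$ exist for every $q\ge 4$. The delicate point is checking that the parallelism conditions between blocks sharing subspaces are genuinely nontrivial, so that the generic-angle argument applies.
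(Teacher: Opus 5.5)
Your proposal has a genuine gap. It never produces a construction for prime orders, which are infinitely many cases, and you flag this yourself as the main obstacle without resolving it.

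\textbf{Why the proposed steps do not cover the primes.} The tensor-product step needs both factors to have maximal cardinality, so both must have order at least $4$. It therefore says nothing about primes, nor about $q=6,9$. The base cases $q=5,6,7$ are only promised, not given. The route you sketch for primes also cannot work as stated. Suppose a block $R\times C$ has entries spanning $V$ with $\dim V=|R|=|C|=m<q$. For each $c\in C$, the entries in rows $R$ form an orthonormal basis of $V$. So for any row $r\notin R$, the entries in columns $C$ lie in $V^\perp$, and $V$ must lie in the span of the other $q-m$ entries of row $r$. Hence $m\le q/2$.

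Combine this bound with your block sizes in $\{4,\dots,7\}$ (orders $2,3$ would repeat vectors, since no QLS of those orders has maximal cardinality). The block sizes met by each row must also sum to $q$. Then $q=5,6,7,9,11$ admit no such decomposition at all. For $q=13$ the allowed sizes are $\{4,5,6\}$, so every row must split as $4+4+5$. Then the $13\cdot 5$ cells lying in order-$5$ blocks would have to be a multiple of $25$, which fails. For larger primes you give no Latin square whose cells are partitioned into subsquares of orders $4$--$7$, and its existence is a nontrivial, unproved claim.

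\textbf{The missing idea.} A complex construction can be made real by a single global unitary, which preserves both the QLS property and the cardinality. Your remark that Fourier-type matrices are unavailable over $\mathbb{R}$ is exactly where the plan goes astray.
\begin{itemize}
\item The paper takes $|\ell_{i,j}\rangle^{(k)}=q^{-1/2}\omega^{i\sigma(k)+jk}$, which is a QLS for every permutation $\sigma$ of $\mathbb{Z}_q$, and chooses $\sigma=(2\;3)(-2\;-3)$ for $q\ge 7$.
\item Coordinate $k=0$ forces any proportionality constant between two entries to be $1$. Coordinates $k=1,2$ then give $x+y\equiv 3x+2y\equiv 0 \pmod q$ for the index differences, so the cardinality is $q^2$.
\item Since $\sigma(-k)=-\sigma(k)$, the vectors $U^*|\ell_{i,j}\rangle$, with $U$ the Fourier matrix, have coordinates $\tfrac1q\sum_k\omega^{i\sigma(k)+(j-r)k}$. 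These are invariant under complex conjugation after the substitution $k\mapsto -k$, hence real.
\end{itemize}
This covers every $q\ge 7$ uniformly, primes included, and only $q=4,5,6$ need separate explicit examples. Your tensor-product and generic-rotation arguments are sound, but they only reduce composite orders to smaller ones and cannot replace this step.
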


\begin{theorem}
\label{thm:1.2}
	For any positive integers $n$ and $q\ge n+2$, there exists a $\mathrm{QLH}(q,n)$ with maximal cardinality $q^n$.
\end{theorem}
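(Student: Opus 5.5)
We would prove Theorem~\ref{thm:1.2} over the real field, along the same lines as Theorem~\ref{thm:1.1}. The plan has two layers. First, build a real $\mathrm{QLH}(q,n)$ that depends on real parameters (rotation angles), in which orthogonality along every line holds identically in the parameters. Second, show that for generic parameters all $q^n$ entries are pairwise non-parallel. The second layer is the soft part. Two unit vectors $u,v\in\mathbb{R}^q$ are parallel up to a global phase (here a sign) iff $1-\langle u,v\rangle^2=0$. For each of the finitely many pairs of cells this is a real-analytic (indeed trigonometric-polynomial) condition in the parameters. So it suffices to show that none of these functions vanishes identically. Then their common complement is open and dense, and a single parameter choice works for all pairs at once. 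For each pair it is enough to exhibit one specialization of the parameters where the two vectors are not parallel.

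For the first layer we start from the classical cyclic Latin hypercube $L(i_0,\dots,i_{n-1})=i_0+\cdots+i_{n-1}\pmod q$, with entries $e_{L(i)}$. We then deform it by rotations acting on $2$-dimensional coordinate planes, mimicking the intercalate trick behind Theorem~\ref{thm:1.1}. Choose a set of cell configurations on which a pair of symbols $\{a,b\}$ occupies a ``$2\times\cdots\times2$'' substructure that every line meets either in both cells or in none. On such a substructure we replace $e_a,e_b$ by $\cos\theta\, e_a\pm\sin\theta\, e_b$ and $\sin\theta\, e_a\mp\cos\theta\, e_b$ in the checkerboard pattern. Each line through it still meets an orthonormal basis of $\mathrm{span}\{e_a,e_b\}$, so the line remains an orthonormal basis of $\mathbb{R}^q$. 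Iterating with independent angles on disjoint substructures, and more generally replacing a $k$-subset of symbols on a Latin subhypercube by a generic real orthogonal $k\times k$ matrix, preserves the QLH property. The hypothesis $q\ge n+2$ is where we expect to gain room. We would use it to choose, for each of the $n$ axis directions plus two extra ones, disjoint ``symbol shifts'' (for instance the $n+2$ linear forms $i_t$, $\sum_t i_t$, $\sum_t t\,i_t$ modulo $q$, or a suitable MDS/orthogonal-array structure). This makes every cell lie in at least one deformed block whose $k\ge2$ symbols genuinely mix. It also separates any two cells carrying the same classical symbol into different blocks or different positions of a block.

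Next we verify non-identical vanishing. Two cells with different classical symbols $a\ne b$ lying in different blocks give vectors with supports that, for generic angles, are not proportional; specializing all angles but one to $0$ exhibits this. Two cells with the same classical symbol must receive different deformations, and here the separation property from the $q\ge n+2$ choice is used. One specializes the block angles so that exactly one of the two cells is rotated by a small nonzero angle. An induction on $n$ may be cleaner: take a generic real $\mathrm{QLH}(q,n-1)$ of maximal cardinality (base case $n=2$ from Theorem~\ref{thm:1.1}, which needs $q\ge4=n+2$). Layer it along the new axis by $c_{i,x}=O_x\,c'_{\sigma_x(i)}$, with generic real orthogonal $O_x$ commuting appropriately with the block structure. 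The new axis is then handled by requiring that $\{O_x c'_{\sigma_x(i)}\}_x$ be an orthonormal basis.

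The main obstacle is exactly this last requirement: arranging the combinatorial skeleton so that lines in \emph{every} one of the $n$ directions stay orthonormal while each cell is deformed distinctly. Our first attempt would be the block-rotation construction on the cyclic hypercube with blocks defined by the extra linear forms. If the blocks fail to be disjoint for small $q$ (near $q=n+2$), we would fall back on the inductive layering and check the borderline cases $q=n+2$ and $q=n+3$ separately, possibly by explicit small examples.
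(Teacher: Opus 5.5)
Your proposal has a genuine gap. It never pins down a construction, and the one concrete mechanism it offers (rotating pairs of classical symbols on disjoint blocks) cannot reach cardinality $q^n$ for $n\ge2$, however the blocks are placed and however generic the angles.

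Your second layer is fine as logic, but it needs a family in which no pair of cells is identically parallel, and the block family is not one. Take a $2\times\cdots\times2$ block carrying symbols $\{a,b\}$ that every line meets in $0$ or $2$ cells. The deformed entries lie in $\mathrm{span}\{e_a,e_b\}$, and the two block cells on any line must be orthogonal. In $\mathbb{R}^2$ the only unit vectors orthogonal to $u$ are $\pm u^{\perp}$. So a cell carrying $u$ has $n\ge2$ line-neighbours in the block, and all of them carry $\pm u^{\perp}$. The block is thus a $\mathrm{QLH}(2,n)$ of cardinality $2$, and $1-\langle u,v\rangle^2$ vanishes identically on such pairs. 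Likewise, applying a single orthogonal $k\times k$ matrix to a Latin subhypercube of order $k$ gives only $k$ vectors on $k^n$ cells.

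So the ``separation'' you hope to extract from $q\ge n+2$ is impossible, and the actual role of that hypothesis is never identified. Your inductive fallback isolates the right difficulty, namely making $\{O_x c'_{\sigma_x(i)}\}_x$ orthonormal for all $i$ at once, but leaves it unsolved. You have also made the task harder than necessary: Theorem~\ref{thm:1.2} does not ask for real entries, and the paper's hypercube is complex.

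The missing idea is the Fourier-type array with permuted frequencies, $|c_{i_0,\dots,i_{n-1}}\rangle^{(k)}=q^{-1/2}\omega^{\sum_t i_t\sigma_t(k)}$, where each $\sigma_t$ is a permutation of $\mathbb{Z}_q$. With $\sigma_{n-1}=\mathrm{id}$ this is in fact a layering of your type, with the unitary $O_x=\mathrm{diag}(\omega^{xk})_k$ acting on flat vectors. Orthonormality along every axis is automatic. The inner product along axis $t$ is $\frac1q\sum_k\omega^{(i_t'-i_t)\sigma_t(k)}=\delta_{i_ti_t'}$, because $\sigma_t$ is a bijection.

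For cardinality, comparing the $k=0$ coordinates forces the global phase to be $1$. A coincidence of two cells therefore gives a nonzero solution of $\sum_t x_t\sigma_t(k)\equiv0\pmod q$. The paper takes $\sigma_t=(t+2\;\,t+3)$ for $t\le n-2$ and $\sigma_{n-1}=\mathrm{id}$. The equations for $k=1,\dots,n$ then give $x_0\equiv0$, then $x_{k-2}\equiv x_{k-3}$ for $3\le k\le n$, and finally $x_{n-1}\equiv0$ from the $k=1$ equation. The hypothesis $q\ge n+2$ is exactly what makes $2,\dots,n+1$ distinct nonzero residues different from $1$. Hence each $\sigma_t$ is a genuine transposition fixing $0$ and $1$, and the evaluations at $k=1,\dots,n$ are as stated.
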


\section{Quantum Latin Squares over the Real Field}

As mentioned in the introduction, the original construction of Zang  uses a fixed transposition of the second and third rows of the normalized Fourier matrix~\cite{MR5047492}. We now replace this fixed transposition with an arbitrary permutation $\sigma\in S_q$, thereby obtaining a family of constructions parameterized by permutations.

\begin{theorem}
\label{thm:2.1}
	For any positive integers $q$ and any permutation $\sigma\in S_q$, let $L=(|\ell_{i,j}\rangle)$ be a $q \times q$ array, where
	$$|\ell_{i,j}\rangle^{(k)}=\frac{1}{\sqrt{q}}\,\omega^{i\sigma(k)+jk},\qquad i,j,k\in\mathbb{Z}_q,
	\omega = e^{2\pi\mathrm{i}/q}.$$
	Then, $L$ is a $\mathrm{QLS}(q)$.
\end{theorem}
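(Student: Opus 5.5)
We verify the three defining conditions directly: every entry is a unit vector, every row is an orthonormal basis, and every column is an orthonormal basis. Since $\mathcal{H}_q$ has dimension $q$, it suffices to show that each row and each column consists of $q$ pairwise orthonormal vectors. Completeness then follows automatically.

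For the unit norm, each coordinate of $|\ell_{i,j}\rangle$ has modulus $1/\sqrt{q}$, because $|\omega^{m}|=1$. Hence $\langle \ell_{i,j}|\ell_{i,j}\rangle=\sum_{k}1/q=1$.

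For orthogonality within a row, fix $i$ and take $j\neq j'$. Then
$$\langle \ell_{i,j'}|\ell_{i,j}\rangle=\frac{1}{q}\sum_{k\in\mathbb{Z}_q}\omega^{-i\sigma(k)-j'k}\,\omega^{i\sigma(k)+jk}=\frac{1}{q}\sum_{k\in\mathbb{Z}_q}\omega^{(j-j')k}.$$
The $\sigma$-dependent phases cancel. The remaining sum is a full geometric sum of the nontrivial $q$-th root of unity $\omega^{j-j'}\neq 1$, so it equals $0$.

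For orthogonality within a column, fix $j$ and take $i\neq i'$. Now the $jk$ phases cancel, and we get
$$\langle \ell_{i',j}|\ell_{i,j}\rangle=\frac{1}{q}\sum_{k\in\mathbb{Z}_q}\omega^{(i-i')\sigma(k)}.$$
This is the one place where a property of $\sigma$ is used. Because $\sigma$ is a bijection of $\mathbb{Z}_q$, we reindex by $m=\sigma(k)$ and obtain $\frac{1}{q}\sum_{m\in\mathbb{Z}_q}\omega^{(i-i')m}=0$ as before.

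The only potential subtlety is this reindexing step. It requires that $\sigma$ permute the index set $\mathbb{Z}_q$ on which the exponents are read modulo $q$. This is immediate once we identify $S_q$ with permutations of $\mathbb{Z}_q$, and the exponent $i\sigma(k)$ is well defined modulo $q$ since $\omega^q=1$.

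Equivalently, $L$ arises from the Fourier matrix $F$ with its rows permuted by $\sigma$: the $k$-th coordinate is $(F^{\sigma})_{k,i}F_{k,j}$ up to normalization. One could cite unitarity of the permuted Fourier matrix, but the direct computation above is cleaner.
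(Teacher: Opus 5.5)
Your proof is correct and matches the paper's argument: both compute the inner products directly, observe that the $\sigma$-phases cancel along a row and the $jk$-phases cancel along a column, and use the bijectivity of $\sigma$ to reindex the column sum into a full geometric sum over $\mathbb{Z}_q$. Your separate unit-norm check is the $j=j'$ case of the paper's $\delta_{jj'}$ computation.
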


\begin{proof}
	We verify that the rows and columns of $L$ form orthonormal bases of $\mathbb{C}^q$.
	Fix $i\in\mathbb{Z}_q$. For any $j,j'\in\mathbb{Z}_q$, we have
	\begin{align*}
		\langle \ell_{i,j}|\ell_{i,j'}\rangle
		=\frac{1}{q}\sum_{k=0}^{q-1}
		\overline{\omega^{i\sigma(k)+jk}}\omega^{i\sigma(k)+j'k}
		=\frac{1}{q}\sum_{k=0}^{q-1}\omega^{(j'-j)k}
		=\delta_{jj'}.
	\end{align*}
	Thus each row forms an orthonormal basis.
	Fix $j\in\mathbb{Z}_q$. For any $i,i'\in\mathbb{Z}_q$, since $\sigma$ is a permutation of $\mathbb{Z}_q$, we have
	\begin{align*}
		\langle \ell_{i,j}|\ell_{i',j}\rangle
		=\frac{1}{q}\sum_{k=0}^{q-1}
		\overline{\omega^{i\sigma(k)+jk}}\omega^{i'\sigma(k)+jk}
		=\frac{1}{q}\sum_{k=0}^{q-1}\omega^{(i'-i)\sigma(k)}
		=\frac{1}{q}\sum_{k=0}^{q-1}\omega^{(i'-i)k}
		=\delta_{ii'}.
	\end{align*}
	Thus each column forms an orthonormal basis.
	Therefore, $L$ is a $\mathrm{QLS}(q)$.
\end{proof}

By choosing special permutations $\sigma$, one obtains $\mathrm{QLS}(q)$ with maximum cardinality $q^2$, as demonstrated in \cite{MR5047492}. In the following, we show that this method can also be adapted to construct maximum cardinality quantum Latin squares over the real field.

Before proving Theorem~\ref{thm:1.1}, we first give an example of a $\mathrm{QLS}(5)$ over the real field.

\begin{example}
\label{ex:q5}
	For $q=5$, taking $\sigma=(1\;\;2)(-1\;\;-2)$, by Theorem~\ref{thm:2.1}, we obtain a $\mathrm{QLS}(5)$ with maximum cardinality, denoted by $L=(|\ell_{i,j}\rangle)_{i,j\in\mathbb{Z}_5}$. Explicitly, the array is
	\[
	\begin{array}{c|c|c|c|c|c|}
		& j=0 & j=1 & j=2 & j=3 & j=4 \\ \hline
		i=0 &
		\frac{1}{\sqrt{5}}
		\begin{pmatrix}
			1\\1\\1\\1\\1
		\end{pmatrix}
		&
		\frac{1}{\sqrt{5}}
		\begin{pmatrix}
			1\\ \omega\\ \omega^2\\ \omega^3\\ \omega^4
		\end{pmatrix}
		&
		\frac{1}{\sqrt{5}}
		\begin{pmatrix}
			1\\ \omega^2\\ \omega^4\\ \omega\\ \omega^3
		\end{pmatrix}
		&
		\frac{1}{\sqrt{5}}
		\begin{pmatrix}
			1\\ \omega^3\\ \omega\\ \omega^4\\ \omega^2
		\end{pmatrix}
		&
		\frac{1}{\sqrt{5}}
		\begin{pmatrix}
			1\\ \omega^4\\ \omega^3\\ \omega^2\\ \omega
		\end{pmatrix}
		\\[3ex] \hline
		i=1 &
		\frac{1}{\sqrt{5}}
		\begin{pmatrix}
			1\\ \omega^2\\ \omega\\ \omega^4\\ \omega^3
		\end{pmatrix}
		&
		\frac{1}{\sqrt{5}}
		\begin{pmatrix}
			1\\ \omega^3\\ \omega^3\\ \omega^2\\ \omega^2
		\end{pmatrix}
		&
		\frac{1}{\sqrt{5}}
		\begin{pmatrix}
			1\\ \omega^4\\ 1\\ 1\\ \omega
		\end{pmatrix}
		&
		\frac{1}{\sqrt{5}}
		\begin{pmatrix}
			1\\ 1\\ \omega^2\\ \omega^3\\ 1
		\end{pmatrix}
		&
		\frac{1}{\sqrt{5}}
		\begin{pmatrix}
			1\\ \omega\\ \omega^4\\ \omega\\ \omega^4
		\end{pmatrix}
		\\[3ex] \hline
		i=2 &
		\frac{1}{\sqrt{5}}
		\begin{pmatrix}
			1\\ \omega^4\\ \omega^2\\ \omega^3\\ \omega
		\end{pmatrix}
		&
		\frac{1}{\sqrt{5}}
		\begin{pmatrix}
			1\\ 1\\ \omega^4\\ \omega\\ 1
		\end{pmatrix}
		&
		\frac{1}{\sqrt{5}}
		\begin{pmatrix}
			1\\ \omega\\ \omega\\ \omega^4\\ \omega^4
		\end{pmatrix}
		&
		\frac{1}{\sqrt{5}}
		\begin{pmatrix}
			1\\ \omega^2\\ \omega^3\\ \omega^2\\ \omega^3
		\end{pmatrix}
		&
		\frac{1}{\sqrt{5}}
		\begin{pmatrix}
			1\\ \omega^3\\ 1\\ 1\\ \omega^2
		\end{pmatrix}
		\\[3ex] \hline
		i=3 &
		\frac{1}{\sqrt{5}}
		\begin{pmatrix}
			1\\ \omega\\ \omega^3\\ \omega^2\\ \omega^4
		\end{pmatrix}
		&
		\frac{1}{\sqrt{5}}
		\begin{pmatrix}
			1\\ \omega^2\\ 1\\ 1\\ \omega^3
		\end{pmatrix}
		&
		\frac{1}{\sqrt{5}}
		\begin{pmatrix}
			1\\ \omega^3\\ \omega^2\\ \omega^3\\ \omega^2
		\end{pmatrix}
		&
		\frac{1}{\sqrt{5}}
		\begin{pmatrix}
			1\\ \omega^4\\ \omega^4\\ \omega\\ \omega
		\end{pmatrix}
		&
		\frac{1}{\sqrt{5}}
		\begin{pmatrix}
			1\\ 1\\ \omega\\ \omega^4\\ 1
		\end{pmatrix}
		\\[3ex] \hline
		i=4 &
		\frac{1}{\sqrt{5}}
		\begin{pmatrix}
			1\\ \omega^3\\ \omega^4\\ \omega\\ \omega^2
		\end{pmatrix}
		&
		\frac{1}{\sqrt{5}}
		\begin{pmatrix}
			1\\ \omega^4\\ \omega\\ \omega^4\\ \omega
		\end{pmatrix}
		&
		\frac{1}{\sqrt{5}}
		\begin{pmatrix}
			1\\ 1\\ \omega^3\\ \omega^2\\ 1
		\end{pmatrix}
		&
		\frac{1}{\sqrt{5}}
		\begin{pmatrix}
			1\\ \omega\\ 1\\ 1\\ \omega^4
		\end{pmatrix}
		&
		\frac{1}{\sqrt{5}}
		\begin{pmatrix}
			1\\ \omega^2\\ \omega^2\\ \omega^3\\ \omega^3
		\end{pmatrix}
		\\ \hline
	\end{array}
	\]
	where \(\omega=e^{2\pi\mathrm{i}/5}\).
	Let $U=\frac{1}{\sqrt5}(\omega^{jk})_{j,k\in\mathbb{Z}_5}$ be the  $0$-th row of $L$. Define $\tilde L=(|\tilde\ell_{i,j}\rangle)$ by
	\[
	|\tilde\ell_{i,j}\rangle = U^*|\ell_{i,j}\rangle, \qquad i,j\in\mathbb{Z}_5.
	\]
	Then $\tilde L$ is a $\mathrm{QLS}(5)$ over the real field with maximum cardinality $25$.
	
	Substituting $\omega+\omega^4=\frac{\sqrt{5}-1}{2}$ and $\omega^2+\omega^3=\frac{-\sqrt{5}-1}{2}$ into $|\tilde\ell_{i,j}\rangle$ yields the following real array:
	\[
	\begin{array}{c|c|c|c|c|c|}
		& j=0 & j=1 & j=2 & j=3 & j=4 \\ \hline
		i=0 &
		\begin{pmatrix} 1 \\ 0 \\ 0 \\ 0 \\ 0 \end{pmatrix} &
		\begin{pmatrix} 0 \\ 1 \\ 0 \\ 0 \\ 0 \end{pmatrix} &
		\begin{pmatrix} 0 \\ 0 \\ 1 \\ 0 \\ 0 \end{pmatrix} &
		\begin{pmatrix} 0 \\ 0 \\ 0 \\ 1 \\ 0 \end{pmatrix} &
		\begin{pmatrix} 0 \\ 0 \\ 0 \\ 0 \\ 1 \end{pmatrix} \\[4ex] \hline
		i=1 &
		\frac{\sqrt{5}}{10}\begin{pmatrix} 0 \\ 2 \\ \sqrt{5}-1 \\ \sqrt{5}+1 \\ -2 \end{pmatrix} &
		\frac{\sqrt{5}}{10}\begin{pmatrix} -2 \\ 0 \\ 2 \\ \sqrt{5}-1 \\ \sqrt{5}+1 \end{pmatrix} &
		\frac{\sqrt{5}}{10}\begin{pmatrix} \sqrt{5}+1 \\ -2 \\ 0 \\ 2 \\ \sqrt{5}-1 \end{pmatrix} &
		\frac{\sqrt{5}}{10}\begin{pmatrix} \sqrt{5}-1 \\ \sqrt{5}+1 \\ -2 \\ 0 \\ 2 \end{pmatrix} &
		\frac{\sqrt{5}}{10}\begin{pmatrix} 2 \\ \sqrt{5}-1 \\ \sqrt{5}+1 \\ -2 \\ 0 \end{pmatrix} \\[4ex] \hline
		i=2 &
		\frac{\sqrt{5}}{10}\begin{pmatrix} 0 \\ \sqrt{5}-1 \\ -2 \\ 2 \\ \sqrt{5}+1 \end{pmatrix} &
		\frac{\sqrt{5}}{10}\begin{pmatrix} \sqrt{5}+1 \\ 0 \\ \sqrt{5}-1 \\ -2 \\ 2 \end{pmatrix} &
		\frac{\sqrt{5}}{10}\begin{pmatrix} 2 \\ \sqrt{5}+1 \\ 0 \\ \sqrt{5}-1 \\ -2 \end{pmatrix} &
		\frac{\sqrt{5}}{10}\begin{pmatrix} -2 \\ 2 \\ \sqrt{5}+1 \\ 0 \\ \sqrt{5}-1 \end{pmatrix} &
		\frac{\sqrt{5}}{10}\begin{pmatrix} \sqrt{5}-1 \\ -2 \\ 2 \\ \sqrt{5}+1 \\ 0 \end{pmatrix} \\[4ex] \hline
		i=3 &
		\frac{\sqrt{5}}{10}\begin{pmatrix} 0 \\ \sqrt{5}+1 \\ 2 \\ -2 \\ \sqrt{5}-1 \end{pmatrix} &
		\frac{\sqrt{5}}{10}\begin{pmatrix} \sqrt{5}-1 \\ 0 \\ \sqrt{5}+1 \\ 2 \\ -2 \end{pmatrix} &
		\frac{\sqrt{5}}{10}\begin{pmatrix} -2 \\ \sqrt{5}-1 \\ 0 \\ \sqrt{5}+1 \\ 2 \end{pmatrix} &
		\frac{\sqrt{5}}{10}\begin{pmatrix} 2 \\ -2 \\ \sqrt{5}-1 \\ 0 \\ \sqrt{5}+1 \end{pmatrix} &
		\frac{\sqrt{5}}{10}\begin{pmatrix} \sqrt{5}+1 \\ 2 \\ -2 \\ \sqrt{5}-1 \\ 0 \end{pmatrix} \\[4ex] \hline
		i=4 &
		\frac{\sqrt{5}}{10}\begin{pmatrix} 0 \\ -2 \\ \sqrt{5}+1 \\ \sqrt{5}-1 \\ 2 \end{pmatrix} &
		\frac{\sqrt{5}}{10}\begin{pmatrix} 2 \\ 0 \\ -2 \\ \sqrt{5}+1 \\ \sqrt{5}-1 \end{pmatrix} &
		\frac{\sqrt{5}}{10}\begin{pmatrix} \sqrt{5}-1 \\ 2 \\ 0 \\ -2 \\ \sqrt{5}+1 \end{pmatrix} &
		\frac{\sqrt{5}}{10}\begin{pmatrix} \sqrt{5}+1 \\ \sqrt{5}-1 \\ 2 \\ 0 \\ -2 \end{pmatrix} &
		\frac{\sqrt{5}}{10}\begin{pmatrix} -2 \\ \sqrt{5}+1 \\ \sqrt{5}-1 \\ 2 \\ 0 \end{pmatrix} \\
		\hline
	\end{array}
	\]
\end{example}

\begin{proof}[\bf Proof of Theorem~\ref{thm:1.1}]
	For $q=5$, an explicit real example is given in Example~\ref{ex:q5}. For $q=4$ and $q=6$, see the Appendix (Examples~\ref{ap:q4} and~\ref{ap:q6}, respectively).
	
	For $q\ge 7 $ , let $\sigma=(2\;\;3)(-2\;\;-3)$ be a permutation on $\mathbb{Z}_q$, then $L$ is a $\mathrm{QLS}(q)$ by Theorem~\ref{thm:2.1}. We will prove that $L$ is a $\mathrm{QLS}(q)$ with maximum cardinality and it is equivalent to a quantum Latin square over the real field.
	
	Suppose that $L$ is not a $\mathrm{QLS}(q)$ with maximum cardinality. Then there exist $(i,j)\neq(i',j')$ and $\theta\in[0,2\pi)$ such that
	$
	|\ell_{i,j}\rangle=e^{\mathrm{i}\theta}|\ell_{i',j'}\rangle.
	$
	Take $k=0$, we have $|\ell_{i,j}\rangle^{(0)}=e^{\mathrm{i}\theta}|\ell_{i',j'}\rangle^{(0)}$, so $\frac{1}{\sqrt{q}}=e^{\mathrm{i}\theta}\frac{1}{\sqrt{q}}$, that means $e^{\mathrm{i}\theta}=1$. Hence, for every $k\in\mathbb{Z}_q\setminus \{0\}$, we have $|\ell_{i,j}\rangle^{(k)}=|\ell_{i',j'}\rangle^{(k)}$, i.e. ,
	$
	(i-i')\sigma(k)+(j-j')k\equiv0\pmod q.
	$
	Since $\sigma=(2\;\;3)(-2\;\;-3)$, we have $\sigma(1)=1$, $\sigma(2)=3$, $\sigma(3)=2$. Taking $k=1,2$, we obtain
	$
	x+y\equiv0\pmod q,
	3x+2y\equiv0\pmod q.
	$
	From $x+y\equiv0$, we get $y\equiv -x\pmod q$. Substituting into $3x+2y\equiv0$ gives $x\equiv0\pmod q$, and hence $y\equiv0\pmod q$. Thus $(i,j)=(i',j')$, a contradiction. Therefore $L$ has cardinality $q^2$.
	
	It remains to show that $L$ is equivalent to a real quantum Latin square. Let $U=\frac{1}{\sqrt q}(\omega^{jk})_{j,k\in\mathbb{Z}_q}$ be the $0$-th row of $L$. Define a new array $\tilde L$ by
	$
	|\tilde\ell_{i,j}\rangle=U^*|\ell_{i,j}\rangle, i,j\in\mathbb{Z}_q.
$
	Since $U^*$ is unitary, $\tilde L$ is also a $\mathrm{QLS}(q)$ with cardinality $q^2$.
	For any $i,j,r\in\mathbb{Z}_q$, we compute
	\[
	|\tilde\ell_{i,j}\rangle^{(r)}
	=(U^*|\ell_{i,j}\rangle)^{(r)}
	=\frac{1}{q}\sum_{k=0}^{q-1}\omega^{i\sigma(k)+(j-r)k}.
	\]
	Using the fact that for any $k\in \mathbb{Z}_q$, $\sigma(-k)=-\sigma(k)$,
	\[
	\overline{|\tilde\ell_{i,j}\rangle^{(r)}}
	=\frac{1}{q}\sum_{k=0}^{q-1}\omega^{-i\sigma(k)-(j-r)k}
	=\frac{1}{q}\sum_{k=0}^{q-1}\omega^{i\sigma(-k)+(j-r)(-k)}
	=|\tilde\ell_{i,j}\rangle^{(r)}.
	\]
	Thus $|\tilde\ell_{i,j}\rangle_r\in\mathbb R$ for all $i,j,r$. Therefore $\tilde L$ is a $\mathrm{QLS}(q)$ over the real field with maximum cardinality $q^2$. This completes the proof.
\end{proof}

\section{Quantum Latin Hypercube With Maximal Cardinality}

Before proving Theorem~\ref{thm:1.2}, we first establish a lemma that will be used in its proof.

\begin{lemma}
	\label{lem:main}
	For any positive integers $n$ and $q\ge n+2$, there exist permutations $\sigma_0,\sigma_1,\dots,\sigma_{n-1}$ on $\mathbb{Z}_q$ such that the system of equations
	\[
	x_0\sigma_0(k)+x_1\sigma_1(k)+\cdots+x_{n-1}\sigma_{n-1}(k)\equiv0\pmod{q},\qquad k\in\mathbb{Z}_q
	\]
	has only the trivial solution, i.e., $x_0\equiv x_1\equiv\cdots\equiv x_{n-1}\equiv0\pmod{q}$.
\end{lemma}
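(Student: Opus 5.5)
The plan is to take $\sigma_0$ to be the identity and each remaining $\sigma_t$ to be an adjacent transposition. Then every equation of the system is a multiple of the total sum $x_0+\cdots+x_{n-1}$ plus a small correction, and a few well-chosen values of $k$ force all $x_t\equiv 0$. Concretely, set $\sigma_0=\mathrm{id}$ and, for $1\le t\le n-1$, let $\sigma_t=(t\;\;t+1)$ be the transposition of the elements $t$ and $t+1$ of $\mathbb{Z}_q$. These are well defined and pairwise distinct because $t+1\le n<q$. For $n=1$ the identity alone works: the equation at $k=1$ reads $x_0\equiv 0$.

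Write $S=x_0+x_1+\cdots+x_{n-1}$ and use the convention $x_n:=0$. Since $\sigma_t(k)=k$ except $\sigma_t(t)=t+1$ and $\sigma_t(t+1)=t$, the equation indexed by $k$ becomes
\begin{align*}
k=1:&\quad S+x_1\equiv 0,\\
2\le k\le n:&\quad kS+x_k-x_{k-1}\equiv 0,\\
k=q-1:&\quad -S\equiv 0 \pmod q.
\end{align*}
At $k=1$ only $\sigma_1$ moves $1$, adding $+1$ to the coefficient of $x_1$. At $2\le k\le n$, $\sigma_k$ contributes $+1$ to the coefficient of $x_k$ (this term is absent when $k=n$) and $\sigma_{k-1}$ contributes $-1$ to the coefficient of $x_{k-1}$. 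The last line is where the hypothesis enters: the element $q-1\equiv -1$ is moved by none of the $\sigma_t$, because the moved points all lie in $\{1,\dots,n\}$ and $n\le q-2$ exactly when $q\ge n+2$.

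From the equation at $k=q-1$ we get $S\equiv 0$. The equation at $k=1$ then gives $x_1\equiv 0$. Next, for $k=2,\dots,n-1$, the equation reads $x_k\equiv x_{k-1}$ once $S\equiv0$, so inductively $x_2\equiv\cdots\equiv x_{n-1}\equiv 0$. Finally $x_0\equiv S-(x_1+\cdots+x_{n-1})\equiv 0$, so only the trivial solution remains.

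The only delicate point is the bookkeeping in the displayed equations: checking which $\sigma_t$ move which $k$, and that the extra row $k=-1$ is left completely untouched. That last check is precisely where $q\ge n+2$ is used.
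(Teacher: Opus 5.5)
Your proposal is correct and follows essentially the same argument as the paper. Both use the identity together with adjacent transpositions on a run of consecutive points. One equation at a unit $k$ fixed by every $\sigma_t$ gives $S\equiv 0$, and the remaining equations then telescope to $x_k\equiv x_{k-1}$. The only difference is cosmetic: the paper takes $\sigma_i=(i+2\;\;i+3)$ with the identity last and uses the fixed point $k=1$, while you shift the transpositions to $(t\;\;t+1)$ and use $k=-1$, with $q\ge n+2$ entering in the same way.
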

\begin{proof}
	Let $\sigma_i=(i+2\;\;i+3)$ for $i=0,1,\dots,n-2$, and let $\sigma_{n-1}$ be the identity permutation on $\mathbb{Z}_q$. Taking $k=1,2,\dots,n$ in turn, we have:
\begin{align*}
	x_0+x_1+\cdots+x_{n-1}&\equiv0\pmod{q},\tag{E1} \\
	3x_0+2x_1+2x_2+\cdots+2x_{n-1}&\equiv0\pmod{q},\tag{E2}\\
	2x_0+4x_1+3x_2+3x_3+\cdots+3x_{n-1}&\equiv0\pmod{q},\tag{E3}\\
	&\;\;\vdots \\
	k\sum_{i=0}^{k-4}x_i+(k-1)x_{k-3}+(k+1)x_{k-2}+k\sum_{i=k-1}^{n-1}x_i&\equiv0\pmod{q},\tag{Ek}\\
	&\;\;\vdots \\
	n x_0+n x_1+\cdots+n x_{n-4}+(n-1)x_{n-3}+(n+1)x_{n-2}+n x_{n-1}&\equiv0\pmod{q}.\tag{En}
\end{align*}
Subtracting $2$ times (E1) from (E2) gives $x_0 \equiv 0 \mod{q}$.
	For $3\le k \le n$, subtracting $k$ times (E1) from (Ek) gives $x_{k-2} \equiv x_{k-3} \pmod{q}$.
	So we have $x_0\equiv x_1\equiv\cdots\equiv x_{n-2}\equiv0\pmod{q}$.
	Substituting these into (E1) yields $x_{n-1}\equiv 0 \pmod{q}$.
	Therefore, the system has only the trivial solution. 
\end{proof}

\begin{proof}[\bf Proof of Theorem~\ref{thm:1.2}]
	
	Let $C=(|c_{i_0,i_1,\dots ,i_{n-1}}\rangle)$ be an $n$-dimensional array of size $q\times q\times\dots \times q$, where
	$$|c_{i_0,i_1,\dots ,i_{n-1}}\rangle ^{(k)}=\frac{1}{\sqrt{q}}\omega^{i_0\sigma_0(k)+i_1\sigma_1(k)+\dots +i_{n-1}\sigma_{n-1}(k)},$$
	$
	\omega = e^{2\pi\mathrm{i}/q}$,$\sigma_i=(i+2\;\;i+3)$ for $i=0,1,\dots,n-2$, and $\sigma_{n-1}$ is the identity permutation on $\mathbb{Z}_q$.
	
	Firstly, we prove that $C$ is a $\mathrm{QLH}(q,n)$. For any $i_t, i_t' \in \mathbb{Z}_q,t\in\{0,1,\dots,n-1\}$, 
	\begin{align*}
		&\langle c_{i_0,\dots,i_t,\dots ,i_{n-1}}|c_{i_0,\dots ,i_t' ,\dots ,i_{n-1}}\rangle\\
		=&\frac{1}{q}\sum_{k=0}^{q-1}\overline{\omega^{i_0\sigma_0(k)+\dots +i_t\sigma_t(k)+\dots +i_{n-1}\sigma_{n-1}(k)}}\omega^{i_0\sigma_0(k)+\dots +i_t'\sigma_t(k)+\dots +i_{n-1}\sigma_{n-1}(k)} \\
		=&\frac{1}{q}\sum_{k=0}^{q-1}\omega^{-(i_0\sigma_0(k)+\dots +i_t\sigma_t(k)+\dots +i_{n-1}\sigma_{n-1}(k))}\omega^{i_0\sigma_0(k)+\dots +i_t'\sigma_t(k)+\dots +i_{n-1}\sigma_{n-1}(k)} \\
		=&\frac{1}{q}\sum_{k=0}^{q-1}\omega^{(i_t'-i_t)\sigma_t(k)} \\
		=&\frac{1}{q}\sum_{k=0}^{q-1}\omega^{(i_t'-i_t)k} \\
		=&\delta_{i_t'i_t}.
	\end{align*}
	Thus for any axis direction $t$, the set
	$
	\{|c_{i_0,\dots , i_{t-1},x,i_{t+1},\dots ,i_{n-1}}\rangle|x\in [q]\}
	$
	is an orthonormal basis of $\mathcal{H}_q$ and $C$ is a $\mathrm{QLH}(q,n)$.
	
	Next, we suppose that $C$ is not a $\mathrm{QLH}(q,n)$ with maximal cardinality. Then
	there exist $(i_0,i_1,\dots ,i_{n-1})\not= (i_0',i_1',\dots ,i_{n-1}')$ such that $|c_{i_0,i_1,\dots ,i_{n-1}}\rangle=e^{i\theta }|c_{i_0',i_1',\dots ,i_{n-1}'}\rangle$.
	So, for any $k\in \mathbb{Z}_q$, we have that
	$$
	\omega^{i_0\sigma_0(k)+i_1\sigma_1(k)+\dots +i_{n-1}\sigma_{n-1}(k)}=e^{i\theta}\omega^{i_0'\sigma_0(k)+i_1'\sigma_1(k)+\dots +i_{n-1}'\sigma_{n-1}(k)}.
	$$
	Take $k=0$, we have $e^{i\theta}=1$. Further, for any $k \in \{1,2,\dots,n\} \subset \{1,2,\dots,q-1\}$ ,
	$$
	(i_0-i_0')\sigma_0(k)+(i_1-i_1')\sigma_1(k)+\dots +(i_{n-1}-i_{n-1}')\sigma_{n-1}(k) \equiv 0 \mod{q}.
	$$
	So, $(i_0-i_0',i_1-i_1',\dots , i_{n-1}-i_{n-1}')\not = \mathbf{0}_n$ is one solution of the system of equations, which contradicts the property of $\sigma_0,\sigma_1,\dots \sigma_{n-1}$ by  Lemma~\ref{lem:main}. That means that $C$ is  a $\mathrm{QLH}(q,n)$ with maximal cardinality.
\end{proof}

In particular, when $q=3$, there exists a quantum Latin cube of order
$n$ with maximal cardinality for $n\ge 5$. Combined with the existence of quantum Latin cube of order
$4$ with maximal cardinality in \cite{MR5034584}, we obtain the following corollary:

\begin{corollary}
	For any positive integers $q\ge 4$, there exists a $\mathrm{QLC}(q)$ with maximal cardinality $q^3$.
\end{corollary}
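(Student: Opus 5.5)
The corollary follows by specialising Theorem~\ref{thm:1.2} to $n=3$, and citing \cite{MR5034584} for the one order it does not reach. I read the sentence before the corollary as a slip: the intended remark is "when $n=3$, there exists a quantum Latin cube of order $q$ with maximal cardinality for $q\ge 5$."

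\textbf{The case $q\ge 5$.} Set $n=3$ in Theorem~\ref{thm:1.2}. Its hypothesis becomes $q\ge n+2=5$. The theorem then gives a $\mathrm{QLH}(q,3)$ of cardinality $q^3$, which is by definition a $\mathrm{QLC}(q)$ with maximal cardinality. Concretely, take $\sigma_0=(2\;\;3)$, $\sigma_1=(3\;\;4)$ and $\sigma_2=\mathrm{id}$, with entries
\[
|c_{i_0,i_1,i_2}\rangle^{(k)}=\tfrac{1}{\sqrt q}\,\omega^{i_0\sigma_0(k)+i_1\sigma_1(k)+i_2k}.
\]
The only point worth checking is the hypothesis $q\ge n+2$. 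It is what makes $2,3,4$ distinct residues modulo $q$, so that $\sigma_1=(3\;\;4)$ is a genuine transposition. It also makes $k=1,2,3$ distinct nonzero residues, which Lemma~\ref{lem:main} needs to conclude that $x_0\equiv x_1\equiv x_2\equiv0$. For $q\ge5$ both conditions hold, so nothing further is required.

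\textbf{The case $q=4$.} Here Theorem~\ref{thm:1.2} does not apply: with $q=4$ the residue $4$ is $0$, and $(3\;\;4)$ would move $0$, which breaks the step $e^{\mathrm{i}\theta}=1$ obtained from $k=0$. Instead I invoke the result of Zhang et al.~\cite{MR5034584} quoted in the introduction. It gives a $\mathrm{QLC}(v)$ with maximal cardinality for all $v\ge4$ except when $v$ is prime or $v=pq$ with $p,q$ prime and $(p,q)\ne(2,2)$. The order $4=2\cdot2$ is exactly the excluded pair $(2,2)$, and $4$ is not prime, so it lies inside their existence range. This gives a $\mathrm{QLC}(4)$ of cardinality $64$.

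Combining the two cases covers every $q\ge4$. There is no real obstacle. The only care needed is matching the parameter ranges: the theorem covers $q\ge5$, and $q=4$ is supplied by \cite{MR5034584}.
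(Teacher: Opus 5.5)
Your proof is correct and is the paper's own argument: Theorem~\ref{thm:1.2} with $n=3$ covers $q\ge5$, the case $q=4$ is taken from \cite{MR5034584}, and your reading of the sentence before the corollary (with $q$ and $n$ interchanged) is the intended one. One small quibble in your commentary: for $q=4$ the residues $2,3,4\equiv0$ are still distinct and $(3\;\;4)=(3\;\;0)$ is still a transposition, so the real role of $q\ge5$ is that $4\not\equiv0\pmod q$, i.e.\ every $\sigma_i$ fixes $0$ --- which is exactly the point you correctly identify in your $q=4$ paragraph.
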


\noindent{\bf Acknowledgments}  H. Cao's research was supported by the National Natural Science Foundation of China (Grants No. 12471313 and No. 12071226).

\appendix

\section{Explicit examples for $q=4$ and $q=6$}

\begin{example}
	\label{ap:q4}
	For $q=4$, the following array is a $\mathrm{QLS}(4)$ over the real field with maximum cardinality $16$:
	\[
	\begin{array}{|c|c|c|c|}
		\hline
		|0\rangle & |1\rangle & \frac{1}{\sqrt{2}}(|2\rangle+|3\rangle) & \frac{1}{\sqrt{2}}(|2\rangle-|3\rangle) \\
		\hline
		|2\rangle &
		|3\rangle &
		\frac{1}{\sqrt{2}}(|0\rangle+|1\rangle) &
		\frac{1}{\sqrt{2}}(|0\rangle-|1\rangle) \\
		\hline
		\frac{1}{\sqrt{2}}(|1\rangle-|3\rangle) &
		\frac{1}{\sqrt{2}}(|0\rangle-|2\rangle) &
		\frac{1}{2}(|0\rangle-|1\rangle+|2\rangle-|3\rangle) &
		\frac{1}{2}(|0\rangle+|1\rangle+|2\rangle+|3\rangle) \\
		\hline
		\frac{1}{\sqrt{2}}(|1\rangle+|3\rangle) &
		\frac{1}{\sqrt{2}}(|0\rangle+|2\rangle) &
		\frac{1}{2}(|0\rangle-|1\rangle-|2\rangle+|3\rangle) &
		\frac{1}{2}(|0\rangle+|1\rangle-|2\rangle-|3\rangle) \\
		\hline
	\end{array}
	\]
	This example was previously observed in \cite{PhysRevA.104.042423}.
\end{example}

\begin{example}
	\label{ap:q6}
	For $q=6$, consider the following two row-quantum Latin rectangles over the reals, each with maximal cardinality:
	\[
	R=
	\begin{bmatrix}
		\frac{1}{\sqrt{2}}(|0\rangle+|1\rangle) & \frac{1}{\sqrt{2}}(|0\rangle-|1\rangle) & |2\rangle \\
		\frac{1}{\sqrt{2}}(|0\rangle+|2\rangle) & \frac{1}{\sqrt{2}}(|0\rangle-|2\rangle) & |1\rangle
	\end{bmatrix},\qquad
	S=
	\begin{bmatrix}
		|0\rangle & |1\rangle \\
		\frac{\sqrt{3}}{2}|0\rangle+\frac{1}{2}|1\rangle & -\frac{1}{2}|0\rangle+\frac{\sqrt{3}}{2}|1\rangle \\
		\frac{1}{2}|0\rangle+\frac{\sqrt{3}}{2}|1\rangle & -\frac{\sqrt{3}}{2}|0\rangle+\frac{1}{2}|1\rangle
	\end{bmatrix}.
	\]
	Denote the entries of $R$ and $S$ by $R=(|r_{i,j}\rangle)_{i\in\{0,1\},\,j\in\{0,1,2\}}$ and $S=(|s_{j,l}\rangle)_{j\in\{0,1,2\},\,l\in\{0,1\}}$, respectively.
	
	Let $W$ be the quantum Latin square of order $6$ obtained by applying Theorem~3.1 of \cite{ZHANG2026114863} to $R$ and $S$.  Its entry in the $k$-th row and $l$-th column of the $(i,j)$-th block is
	\[
	|w_{i,j,k,l}\rangle = |r_{i,j+k\pmod3}\rangle \otimes |s_{j,i+l\pmod2}\rangle,
	\]
	where $i,l\in\{0,1\}$, $j,k\in\{0,1,2\}$.
	
	Since $R$ and $S$ consist of real vectors, $W$ gives a quantum Latin square of order $6$ over the real field with maximal cardinality~$36$.  Explicit enumeration of all $36$ vectors is omitted for brevity.
\end{example}

\end{document}